\documentclass[12pt]{amsart}

\usepackage{amssymb}

\usepackage{anysize}
\marginsize{2cm}{2cm}{2cm}{2cm}

\renewcommand{\ge}{\geqslant}

\frenchspacing

\usepackage{versions}
\usepackage{hyperref}
\hypersetup{
  colorlinks   = true, 
  urlcolor     = blue, 
  linkcolor    = blue, 
  citecolor   = red 
}
\usepackage{cancel}
\usepackage{yhmath}
\usepackage{enumitem}
\usepackage{stackengine}

\usepackage{lineno}


\newtheorem{theorem}{Theorem}[section]

\newtheorem{corollary}[theorem]{Corollary}

\theoremstyle{definition}

\theoremstyle{remark}

\newtheorem{example}[theorem]{Example}

\numberwithin{equation}{section}


\newcommand*{\where}{\ \ifnum\currentgrouptype=16 \middle\fi|\ }

\renewcommand{\epsilon}{\varepsilon}
\renewcommand{\phi}{\varphi}
\renewcommand{\kappa}{\varkappa}
\renewcommand{\theta}{\vartheta}

\def\R{{\mathbb R}}

\title[Topological lower bounds on the sizes of\dots]{Topological lower bounds on the sizes of simplicial complexes and simplicial sets}

\author{Sergey Avvakumov{$^\spadesuit$}}
\author{Roman Karasev{$^\clubsuit$}}

\thanks{{$^\spadesuit$} Supported by the European Research Council under the European Union's Seventh Framework Programme ERC Grant agreement ERC StG 716424 -- CASe}

\thanks{{$^\clubsuit$} Supported by the Russian Foundation for Basic Research grant 19-01-00169}

\address{Sergey Avvakumov, School of Mathematical Sciences, Tel Aviv University, Tel Aviv 69978, Israel}
\email{savvakumov@gmail.com}

\address{Roman Karasev, Institute for Information Transmission Problems RAS, Bolshoy Karetny per. 19, Moscow, Russia 127994 and Moscow Institute of Physics and Technology, Institutskiy per. 9, Dolgoprudny, Russia 141700
}
\email{r\_n\_karasev@mail.ru}
\urladdr{http://www.rkarasev.ru/en/}

\begin{document}

\maketitle

\begin{abstract}
We prove that if an $n$-dimensional space $X$ satisfies certain topological conditions then any triangulation of $X$ as well as any its representation as a simplicial set with contractible faces has at least $2^n$ faces of dimension $n$.

One example of such $X$ is the $n$-dimensional torus $(S^1)^n$.
\end{abstract}

\section{Introduction}



We establish a lower bound on the number of top-dimensional simplices in a simplicial complex or a simplicial set homeomorphic (or homotopically equivalent) to the given topological space, provided that the topology of the space is rich in a certain sense.

\begin{theorem}
\label{theorem:orbits}
Let $G$ be a finite group and let $V$ be an $n$-dimensional real $G$-representation. Let $X$ be an $n$-dimensional simplicial set with a simplicial action of $G$. Assume that

\begin{itemize}
\item[1.] the action is such that for every face $\sigma$ of $X$ and every $g\not=e\in G$ the intersection $\sigma\cap g\sigma$ is empty;
\item[2.] the image of every $G$-equivariant map $X\to V$ contains the origin in $V$ $($the Borsuk--Ulam type property$)$.
\end{itemize}

Then $X$ has at least $2^n$ $G$-orbits of faces of dimension $n$.
\end{theorem}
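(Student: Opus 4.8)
The plan is to recast hypothesis 2 as a statement about equivariant maps and then to produce a very economical such map. First observe that condition 1 makes the $G$-action on $X$ free: a point fixed by some $g\neq e$ lies in a simplex $\sigma$ with $\sigma\cap g\sigma\neq\emptyset$. The same condition forces the vertices of every simplex to lie in pairwise distinct $G$-orbits, so the orbit projection $X\to X/G$ behaves like a regular degree-$|G|$ covering and the chain complex $C_*(X)$ is free over $\Z G$. Consequently, with integer coefficients twisted by the orientation character $\tau$ of $V$, one has $H^*_G(X;\Z_\tau)=H^*(X/G;\Z_\tau)$, and the equivariant Euler class restricted to $X$ is the twisted Euler class of the rank-$n$ bundle $E=X\times_G V\to X/G$, whose fibre sphere $S(V)\cong S^{n-1}$ is $(n-2)$-connected. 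Since $X/G$ has dimension $\le n$, the only obstruction to a nowhere-zero section of $E$ — equivalently, to a $G$-equivariant map $X\to V\setminus\{0\}$ — lives in degree $n$ and equals $e(V)$ on $X$. Thus hypothesis 2 says exactly: \emph{every $G$-equivariant map $f\colon X\to V$ has a zero}.

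Next I would build such an $f$ combinatorially and cheaply. Because the vertices of each simplex lie in distinct free orbits, one may prescribe $f$ arbitrarily on a set of orbit representatives of the vertices and extend it to be affine on every simplex of $X$ — for a regular simplicial set using that non-degenerate simplices have embedded boundary — with no subdivision of $X$, so $f$ is genuinely controlled by the given combinatorics. For generic vertex values, $0$ misses the image of the $(n-1)$-skeleton and $f$ is injective on every $n$-simplex; hence $f^{-1}(0)$ is a finite free $G$-set meeting each $n$-simplex in at most one interior point, and the local degree of $f$ at an $n$-simplex is $0$ or $\pm 1$. These local degrees assemble into a $\{0,\pm1\}$-valued equivariant cocycle representing $e(V)$; by hypothesis it is not a coboundary, so $f^{-1}(0)\neq\emptyset$ and at least one $G$-orbit of $n$-simplices carries a zero.

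The point just reached yields only one orbit, and a single non-zero degree-$n$ class never forces more, so the factor $2^n$ must come from the rigidity of simplicial complexes and regular simplicial sets together with the equivariance; this combinatorial lower bound is the heart of the matter and the main obstacle. I would attack it by induction on $n$, modelled on the lower bound theorem for centrally symmetric triangulated spheres (the case $G=\Z/2$, $X=S^n$ with the free antipodal action, $V$ the $n$-fold sum of the sign representation, where the boundary of the $(n+1)$-dimensional cross-polytope, with its $2^{n+1}$ facets, i.e. $2^n$ orbits, is extremal). The doubling $2^n=2\cdot 2^{n-1}$ should arise by peeling off one direction: pick a summand or quotient line of $V$ on which some $g_0\in G$ acts by $-1$ (possible since $e(V)\neq 0$ forces $V^G=0$), take a generic equivariant section $s\colon X\to \mathbb R_{\chi}$ of the corresponding line bundle, and consider the hypersurface $Z=s^{-1}(0)$, an $(n-1)$-dimensional free $G$-complex on which the rank-$(n-1)$ Euler class $e(V/\mathbb R_\chi)$ is again non-zero; the two half-spaces $\{s>0\}$ and $\{s<0\}$ are interchanged by $g_0$, which is where the factor $2$ enters, while the mixed $n$-simplices of $X$ correspond to the top cells of $Z$, to which induction applies. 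The delicate part is to carry out this reduction with no manifold, pseudomanifold, or polytope hypothesis available, controlling how each top simplex of $X$ contributes a \emph{bounded} number of cells to $Z$ (the hyperplane sections are products of simplices, not simplices) so that the count really splits into two halves of size $2^{n-1}$ rather than degrading by an $n$-dependent factor, and ensuring non-vanishing of the Euler class genuinely propagates to the smaller problem.
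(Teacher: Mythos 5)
There is a genuine gap, and you have honestly located it yourself: the final paragraph, where the factor $2^n$ is supposed to appear, is only a plan, and you explicitly flag that you do not know how to carry out the slicing argument without the count degrading. That is indeed where the argument breaks. Slicing $X$ by a generic equivariant section of a line sub-representation produces an $(n-1)$-dimensional complex whose cells are \emph{not} simplices but products/truncations of simplices, so it is neither a simplicial complex nor a regular simplicial set and the inductive hypothesis does not apply to it. Moreover, one $n$-simplex of $X$ can contribute a number of top cells of the slice that grows with $n$, so even if one could re-triangulate and re-apply the bound, the $2\cdot 2^{n-1}$ bookkeeping does not close. Your first two paragraphs (Euler class as the obstruction to an equivariant map $X\to V\setminus\{0\}$, generic affine $f$ giving a small cocycle) are fine and in the spirit of the paper, but by themselves they only show $f^{-1}(0)\neq\emptyset$, i.e.\ that at least one orbit of $n$-faces contributes, as you say.

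The paper obtains the factor $2^n$ by a quite different device, one your outline does not contain. Rather than working in $X$ and inducting on the dimension of $V$, it equivariantly maps $X$ into the large join $EG_N = G * \dots * G$ (with $N+1$ equal to the number of vertex orbits of $X$, using exactly your observation that each simplex meets each orbit at most once). In $EG_N$ there is a huge transitive symmetry $H_N = G^{\times(N+1)}\rtimes\mathfrak S_{N+1}$ on $n$-faces, and a double-counting/averaging argument shows that for any cocycle $C$ representing the pulled-back Euler class one has $z_n\, c_n \ge |G|^n\binom{N+1}{n+1}$, where $z_n$ is the number of $n$-face orbits in the image of $X$ and $c_n$ the number in the support of $C$. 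Separately, the $2^{-n}$ appears as the probability that a generic simplex with sign-symmetrically distributed vertices contains the origin: modifying a fixed generic equivariant linear map $F:EG_N\to V$ by sign flips $e\in\{\pm1\}^{N+1}$ on the join factors, each $n$-face captures $0$ for exactly a $2^{-n}$ fraction of the $e$'s, so some choice of $e$ yields a cocycle with $c_n \le 2^{-n}|G|^n\binom{N+1}{n+1}$. Combining gives $z_n\ge 2^n$. So the $2^n$ comes from a probabilistic/averaging argument inside the ambient join, not from a dimensional induction on $V$; this is precisely the step your proposal is missing, and the inductive slicing you sketched would, for the reasons above, not substitute for it.
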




Here is an example of use of Theorem~\ref{theorem:orbits}:

\begin{corollary}
\label{corollary:torus-like}
Let $X$ be a simplicial set such that all its closed faces are contractible. Let $X$ have cohomology classes $\xi_1,\ldots,\xi_n\in H^1(X;\mathbb F_2)$ with non-zero product. Then $X$ has at least $2^n$ faces of dimension $n$.
\end{corollary}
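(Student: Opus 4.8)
The plan is to deduce the corollary from Theorem~\ref{theorem:orbits}, applied to the group $G=(\mathbb Z_2)^n$ and to the $n$-dimensional $G$-representation $V=\chi_1\oplus\dots\oplus\chi_n$, where $\chi_i\colon G\to\mathbb Z_2=O(1)$ is the projection onto the $i$-th factor. The space to which the theorem will be applied is a suitable $(\mathbb Z_2)^n$-cover $\widetilde X$ of $X$, chosen so that $X=\widetilde X/G$ and so that the second hypothesis of the theorem becomes the non-vanishing of $\xi_1\cup\dots\cup\xi_n$.

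First I would build $\widetilde X$. Each $\xi_i\in H^1(X;\mathbb Z_2)$ classifies a double cover $p_i\colon\widetilde X_i\to X$; set $\widetilde X := \widetilde X_1\times_X\dots\times_X\widetilde X_n$, the fibrewise product. This is again a covering of $X$ (disconnected in general), the group $G=(\mathbb Z_2)^n$ acts on it freely and simplicially with the $i$-th factor acting through the deck transformations of $\widetilde X_i$, and $\widetilde X/G=X$. Pulling the simplicial complex (resp.\ regular simplicial set) structure of $X$ back along the covering makes $\widetilde X$ a simplicial complex (resp.\ regular simplicial set) on which $G$ acts by simplicial automorphisms; and since $X=\widetilde X/G$ is a simplicial complex (resp.\ regular simplicial set), hypothesis~1 of the theorem holds — this is exactly the observation made in the paragraph following Theorem~\ref{theorem:orbits}, applied to $\widetilde X$ in place of the universal cover. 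In particular every simplex of $\widetilde X$ maps isomorphically onto a simplex of $X$, and $G$ permutes the simplices lying over a fixed simplex of $X$ simply transitively (transitively because $G$ acts transitively on each fibre of the covering, freely because a non-trivial element fixing a simplex would fix a point of it). Hence the $G$-orbits of $n$-dimensional faces of $\widetilde X$ are in bijection with the $n$-dimensional faces of $X$.

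Next I would verify hypothesis~2. Write $H^*_G(\mathrm{pt};\mathbb Z_2)=H^*(BG;\mathbb Z_2)=\mathbb Z_2[t_1,\dots,t_n]$ with $t_i=w_1(\chi_i)$ the pullback of the generator of $H^1(B\mathbb Z_2;\mathbb Z_2)$ along the $i$-th projection $BG\to B\mathbb Z_2$. Then the equivariant Euler class of $V$ (its top mod~$2$ Stiefel–Whitney class) is $e(V)=w_1(\chi_1)\cdots w_1(\chi_n)=t_1\cdots t_n$. Since $G$ acts freely on $\widetilde X$, the Borel construction gives a homotopy equivalence $EG\times_G\widetilde X\simeq\widetilde X/G=X$, so $H^*_G(\widetilde X;\mathbb Z_2)\cong H^*(X;\mathbb Z_2)$, and under the restriction map $H^*_G(\mathrm{pt})\to H^*_G(\widetilde X)$ — which is the map on cohomology induced by the classifying map $X\to BG$ of the covering $\widetilde X\to X$, namely the product of the classifying maps of the $\widetilde X_i$ — the class $t_i$ is sent to $\xi_i$. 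Therefore the image of $e(V)$ in $H^n_G(\widetilde X)\cong H^n(X;\mathbb Z_2)$ equals $\xi_1\cup\dots\cup\xi_n$, which is non-zero by hypothesis; so $e(V)$ is non-zero on $\widetilde X$.

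With both hypotheses in place, Theorem~\ref{theorem:orbits} yields that $\widetilde X$ has at least $2^n$ $G$-orbits of $n$-dimensional faces, and by the bijection established above $X$ itself has at least $2^n$ faces of dimension $n$. The two points that require care, rather than being purely formal, are (a) checking that the pulled-back structure on the cover is genuinely a simplicial complex (resp.\ regular simplicial set) and that hypothesis~1 holds for it — but this is precisely the content of the remark immediately following Theorem~\ref{theorem:orbits}; and (b) matching the paper's definition of ``$e(V)$ is non-zero on $X$'' with the identification $e(V)\mapsto\xi_1\cup\dots\cup\xi_n$ under restriction to $\widetilde X$. Everything else reduces to standard facts about coverings, Borel constructions, and characteristic classes.
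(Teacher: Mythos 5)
Your proposal is correct and follows essentially the same route as the paper: same group $G=\mathbb Z_2^n$, same fibrewise-product covering $\widetilde X\to X$, same sign-change representation $V$, and the same identification of $e(V)|_{\widetilde X}$ with $\xi_1\cup\dots\cup\xi_n$ via the classifying map of the covering. You are slightly more explicit than the paper in spelling out the bijection between $G$-orbits of $n$-faces in $\widetilde X$ and $n$-faces of $X$ and in phrasing the cohomology comparison via the Borel construction, but these are presentational differences, not a different argument.
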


\begin{example}
\label{example:torus}
In particular, any simplicial set with all faces contractible (this is a non-trivial requirement for a simplicial set) and homeomorphic (or homotopically equivalent) to the $n$-torus $(S^1)^n$ or to the $n$-dimensional real projective space $\R P^n$ has at least $2^n$ faces of dimension $n$.

For simplicial sets representing $\R P^n$ this bound is sharp. By identifying the opposite points of the boundary of the $(n+1)$-dimensional crosspolytope we get a simplicial set homeomorphic to $\R P^n$ with exactly $2^n$ faces of dimension $n$, all whose faces are embedded simplices and so contractible.
\end{example}

Theorem~\ref{theorem:orbits} can be seen as a generalization of the result of B\'ar\'any and Lov\'asz \cite{barany1982borsuk}, who proved that any centrally symmetric triangulation of the $n$-sphere has at least $2^{n+1}$ faces of dimension $n$. 


One powerful tool for face enumeration in manifolds (and simplicial complexes) is the manifold $g$-conjecture which was recently proved by Adiprasito \cite{adiprasito2018combinatorial}, see also \url{http://www.math.huji.ac.il/~adiprasito/bpa.pdf} for the most general statement of the manifold lower bound theorem, and see also \cite{papadakis2020characteristic} and \cite{adiprasito2021anisotropy} for another proof. Using the conjecture one can, under some assumptions, bound from below the number of faces in a triangulation of a manifold in terms of its Betti numbers, see \cite[Theorem 4.4]{govc2020many}. From personal communication with Karim Adiprasito we learned that the assumption of the coefficients field being infinite in \cite[Theorem 4.4]{govc2020many} is most probably not required. A finite field can be extended to an infinite one and Betti numbers are invariant under field extension. 

Compared to \cite[Theorem 4.4]{govc2020many}, our Theorem~\ref{theorem:orbits} can be applied to simplicial sets as well as simplicial complexes; it also has a rather short, elementary, and self-contained proof. On the other hand, \cite[Theorem 4.4]{govc2020many} sometimes gives stronger (though still only exponential) bounds, for instance for the $n$-torus, and involves Betti numbers which might be easier to deal with than checking the topological condition of Theorem~\ref{theorem:orbits}.

Another notable approach to general lower bounds on the size of a triangulation is to exploit the natural similarity between the number of top-dimensional simplices and the continuous \emph{volume}. The latter can be estimated in terms of the \emph{systole} using the classical Gromov's systolic inequality. For an example of this approach see \cite{kowalick2015combinatorial}. Unfortunately, systolic inequalities produce good bounds only when the systole is long enough. In our discrete setting the systole corresponds to the edge-length of the shortest non-contractible path in the given space and without additional assumptions can be as short as $3$ in simplicial complexes or $2$ in simplicial sets of the kind we study, which is not enough for a non-trivial bound.






\subsection*{Acknowledgments} We thank Karim Adiprasito for valuable discussions, Arseniy Akopyan for  discussions and drawing our attention to the argument below about random simplices, and the anonymous referees for numerous useful remarks.

\section{Proofs}

\begin{proof}[Proof of Theorem~\ref{theorem:orbits}]

Assuming that the number of $G$-orbits of $n$-faces of $X$ is strictly less than $2^n$, we are going to construct a $G$-equivariant map $X\to V$ that misses the origin contradicting Assumption 2 of the theorem. 

The proof basically follows the classical argument (which we learned from Arseniy Akopyan) showing that a probability that a random $n$-simplex contains the origin is $2^{-n}$, assuming that its vertices are mutually independent and the distribution of any single vertex is absolutely continuous and centrally symmetric.
	
Let $v_1,\ldots, v_N$ be the representatives of all the $G$-orbits of vertices of $X$. 
To construct a $G$-equivariant map $F : X\to V$ it is sufficient to choose the points $F(v_k)\in V$, extend this $F$ to other vertices of $X$ equivariantly (note that by Assumption 1 of the theorem the action of $G$ on vertices is free), and then extend $F$ to each face of $X$ linearly.
	
For any vector of signs $e\in \{-1,1\}^N$, we consider the modification of $F$, that is defined in the similar way, but starting with
\[
F_e (v_k) = e_k F(v_k). 
\]
Note that by equivariance $F_e(gv_k) = g e_k F(v_k) = e_k g F(v_k) = e_k F(g v_k)$.
	
Let us require that all such $F_e$ are \emph{generic}, that is the $F_e$ images of any $n$ vertices of $X$ from different $G$-orbits are linearly independent and the images of any $n+1$ vertices of $X$ from different $G$-orbits are affinely independent. The set of the initial maps $F$ that produce a non-generic $F_e$ for some sign vector $e$ is indeed a proper algebraic subset of the set of all possible maps $F$, hence we may restrict our consideration to families of generic $F_e$.
	
Let $\sigma$ be an $n$-face of $X$. Consider the case when it is degenerate in the sense that some of its vertices coincide in $X$ (this may happen with a simplicial set). Let $w_0,\ldots, w_k$ be its distinct vertices. By Assumption 1 they belong to different $G$-orbits and therefore by the genericity their $F_e$-images are linearly independent. Since $k<n$ in the presence of coincidences, $F_e(\sigma)$ does not touch the origin. 

Now consider the case when $\sigma$ has $n+1$ distinct vertices in $X$, all belonging to different $G$-orbits by Assumption 1. Let $w_0,\dots,w_n$ be the $F$-images of the vertices of $\sigma$, for a generic $F$ they are affinely independent. Write $0\in V$ as the unique (up to a multiplication by a non-zero number) linear combination 
\[
0=a_0w_0 + \dots + a_nw_n.
\]
For a generic $F$, we also have that $a_i\neq 0$ for all $i$. Evidently, $0\in F(\sigma)$ if and only if all the coefficients $a_i$ are of the same sign.

Choose $e\in \{-1,1\}^N$ uniformly at random. Considering the $F_e$-images $e_{k(0)}w_0,\dots,e_{k(n)}w_n$ of the vertices of $\sigma$ (here $k(i)$ is the number of the $G$-orbit of the $i$th vertex of $\sigma$), the linear combination above becomes

\begin{equation}
\label{equation:linear}
0 = a_0e_{k(0)} (e_{k(0)}w_0) + \dots + a_ne_{k(n)} (e_{k(n)}w_n),
\end{equation}
that is, its coefficients change to $e_{k(0)} a_0,\ldots,e_{k(n)} a_n$. Because all the vertices of $\sigma$ belong to distinct $G$-orbits, the indices $k(0), \ldots, k(n)$ are distinct and the corresponding signs $e_{k(i)}$ are all independent.

Tracing the signs of the coefficients $e_{k(0)} a_0,\ldots, e_{k(n)}a_n$, we conclude that precisely $2^{-n}$ fraction of the simplices $F_e(\sigma)$ have the coefficients in the linear combination \eqref{equation:linear} of the same sign, meaning that $F_e(\sigma)$ covers the origin with probability $2^{-n}$.

Choose the representatives $\sigma_1, \ldots, \sigma_m$ of $G$-orbits of $n$-faces of $X$. If $m<2^n$ then with positive probability none of $F_e(\sigma_i)$ contain the origin. From $G$-equivariance of $F_e$ there is no image of an $n$-face of $X$ containing the origin, that is $F_e(X)\not\ni 0$ at all, since the dimension of $X$ is $n$.  This contradiction shows that $m\ge 2^n$.
\end{proof}

\begin{proof}[Proof of Corollary~\ref{corollary:torus-like}]

In this proof we use some topology and the reader is referred to the textbook \cite{hatcher2001} that contains the necessary basics.

Every $\xi_i$ corresponds to a classifying map $f_i : X\to K(\mathbb Z_2,1)=\mathbb RP^\infty$ and the pullback of the double covering $S^\infty\to \mathbb RP^\infty$ is a double covering $\widetilde X_i\to X$. The fiber-wise product of these double coverings is a covering $\widetilde X\to X$ corresponding to a map $\pi_1(X)\to \mathbb Z_2^n$. Put $G=\mathbb Z_2^n$ and let $g_1,\ldots, g_n$ be its generators so that $g_i$ corresponds to the $i$th factor in the Cartesian product.

Note that a class $\xi_i$ is the pullback of the generator $\eta \in H^1(\mathbb RP^\infty;\mathbb F_2)$ under $f_i$. Consider $V=\mathbb R^n$ as a representation of $G=\mathbb Z_2^n$ so that $g_i$ flips the sign of the $i$th coordinate  of $\mathbb R^n$ and preserves the other coordinates. 

Now we want to apply Theorem~\ref{theorem:orbits} to $\widetilde X$ and the action of $G$ on it. Assumption 1 of Theorem~\ref{theorem:orbits} is satisfied since for every face $\sigma$ of $X$ (from its contractibility) its preimage in $\widetilde X$ is covered by a disjoint set of $G$-shifted copies of $\sigma$. Moreover, so we establish that such copies of all faces of $X$ in $\widetilde X$ constitute a representation of $\widetilde X$ as a simplicial set. 

As required by Theorem~\ref{theorem:orbits}, let us drop all faces of $X$ (and respectively $\widetilde X$) of dimension higher that $n$, that is, pass to the $n$-skeleton. This does not influence the cohomology product inequality $\xi_1\cdots \xi_n \neq 0$ and keeps Assumption 1 of Theorem~\ref{theorem:orbits} valid. It remains to show that there cannot be a $G$-invariant map $F: \widetilde X\to V$ for the $n$-dimensional $\widetilde X$, thus checking Assumption 2 of Theorem~\ref{theorem:orbits}. 

Put $\widetilde Z_i = \{x\in \widetilde X\ |\ F_i(x) = 0\}$ for $i=1,\ldots, n$, where $F_i$ are coordinates of $F$. Any set $\widetilde Z_i$ is $G$-invariant and projects to $Z_i\subseteq X$, let $U_i=X\setminus Z_i$. Note that the inequalities $F_i(x)>0$ and $F_i(x)<0$ split $\widetilde X\setminus \widetilde Z_i$ in two parts, interchanged by the involution $g_i\in G$. Projecting this splitting to the two-sheet covering $\widetilde X_i\to X$, we see that this covering $\widetilde X_i\to X$ trivializes over $U_i$. This means that the composition of the inclusion $U_i\to X$ and $f_i : X\to \mathbb RP^\infty$ is null-homotopic, which implies $\xi_i|_{U_i}=(f_i|_{U_i})^* \eta = 0$. 

If $U_1\cup\dots\cup U_n = X$ then one would have $\xi_1\dots \xi_n=0$ over $X$ by the standard property of the cohomology product. Since this is not the case, we have $Z_1\cap\dots\cap Z_n\neq \emptyset$. Because the sets $\widetilde Z_i$ are $G$-equivariant and so the lifting of each $Z_i$ in $\widetilde X$ is precisely $\widetilde Z_i$, we get that $\widetilde Z_1\cap\dots\cap \widetilde Z_n\neq \emptyset$, and so $F^{-1}(0)\neq\emptyset$.

\end{proof}

\bibliography{combi-crofton}
\bibliographystyle{abbrv}

\end{document}